\documentclass[11pt]{article}
\usepackage[T1]{fontenc}
\usepackage{lmodern,amsmath,amssymb,amsthm,mathtools}
\usepackage[margin=1in]{geometry}
\usepackage{microtype}
\usepackage[hidelinks]{hyperref}
\hypersetup{pdftitle={Dihedral fields with Bloch lattice index five and quadratic subfield Q(i)},pdfauthor={Zheng Huimin},pdfcreator={},pdfproducer={}}
\ifdefined\pdfinfoomitdate\pdfinfoomitdate=1\fi
\ifdefined\pdftrailerid\pdftrailerid{}\fi
\ifdefined\pdfsuppressptexinfo\pdfsuppressptexinfo=15\fi
\newtheorem{theorem}{Theorem}
\newtheorem{lemma}[theorem]{Lemma}
\newtheorem{corollary}[theorem]{Corollary}
\newtheorem{proposition}[theorem]{Proposition}
\theoremstyle{remark}
\newcommand{\Q}{\mathbb Q}\newcommand{\Z}{\mathbb Z}
\newcommand{\F}{\mathbb F}\newcommand{\Gal}{\operatorname{Gal}}
\newcommand{\res}{\operatorname{res}}\newcommand{\cor}{\operatorname{cor}}

\newcommand{\coker}{\operatorname{coker}}
\newcommand{\N}{\operatorname{N}}

\title{Dihedral fields with Bloch lattice index five\protect\\and quadratic subfield \(\Q(i)\)}
\author{Zheng Huimin\\[0.4em]
  \small Anhui Science and Technology University\\
  \small \texttt{zhenghm@ahstu.edu.cn}}
\date{October 16, 2026}
\begin{document}
\maketitle
\begin{abstract}
We construct infinitely many Galois extensions \(M/\Q\) with dihedral
group of order ten, quadratic subfield \(\Q(i)\), and Bloch lattice index
\(Q_2(M)=5\). More precisely, we specify a Chebotarev set of rational primes
of density \(1/50\). For every prime \(\ell\) in this set, at least two of
four mixed cyclic quintic extensions of \(\Q(i)\) of conductor \((25\ell)\)
have this index and satisfy \(K_2(O_M)(5)\simeq(\Z/5)^2\). A
two-dimensional cup-product pencil selects the extensions. Generalized
Bockstein maps, integral co-descent, and a motivic genus formula then
determine the tame kernel and the subfield Bloch-lattice index. The result
is unconditional.
\end{abstract}

\section{Introduction and main results}
Throughout, \(D_{10}\) denotes the dihedral group of order ten, and \(O_J\)
is the ring of integers of a number field \(J\). The group \(G_{J,S}\) is
the Galois group of the maximal extension of \(J\) unramified outside
\(S\); all cohomology is continuous. The symbols \(\F_5(m)\) and
\(\Z_5(m)\) denote Tate twists. A character of a ray class group of an
imaginary quadratic field is called anti-invariant if complex conjugation
acts on it by \(-1\).

Brauer--Kuroda relations compare the zeta functions of a dihedral number
field and its proper subfields. Their regulator analogues introduce an
arithmetic index: the subfield Bloch lattices need not generate the full
Bloch lattice. Browkin--Gangl \cite{BG} studied this index and supplied
numerical evidence for a nontrivial value in degree ten. Guo--Qin
\cite{GQ} proved a divisibility theorem that restricts the index, in
degree ten, to \(1\) or \(5\). The arithmetic
task addressed here is to realize the nontrivial value in an infinite
family while keeping the quadratic subfield fixed.

We do this for \(p=5\) and quadratic subfield \(\Q(i)\). The construction
allows a choice among four mixed ray class fields for every prime in one
fixed Chebotarev set. A quadratic determinant ensures that at least two
choices work. An integral cohomology argument then proves the actual Bloch
lattice index, rather than inferring it from a numerical regulator quotient.

For a number field \(J\), let \(\mathcal B(J)\) denote the free quotient of its
Bloch group, where the free quotient means the quotient by all torsion;
equivalently, it is the free quotient of \(K_3^{\mathrm{ind}}(J)\).
The natural Suslin sequence, in the Bloch-group conventions of
\cite[Section 2]{GQ}, identifies these free quotients compatibly with field
embeddings. The regulator identifies
the free quotient with its Bloch lattice. Let \(M/\Q\) be a non-real
\(D_{10}\)-extension, let \(k\) be its quadratic subfield, and let
\(F_1,F_2\) be two different quintic reflection subfields. The index considered
by Browkin--Gangl and Guo--Qin is
\[
 Q_2(M)=[\mathcal B(M):\mathcal B(k)+\mathcal B(F_1)+\mathcal B(F_2)],
\]
where subgroups are understood via their natural images. Guo--Qin
\cite[Theorem 4.3]{GQ} prove \(Q_2(M)\mid5\).

Put \(k=\Q(i)\), \(E=k(\mu_5)=\Q(\zeta_{20})\), and let \(S_0\) consist of
the primes above five and the infinite places. All sets of primes are pulled
back to every extension in use. Write \(K_2(O_J)(5)\) for the five-primary
subgroup of \(K_2(O_J)\). The base vanishing used below is proved in Lemma~\ref{lem:base}.

For \(\ell\equiv1\pmod{20}\), choose nonzero anti-invariant
\(\F_5\)-valued ray class characters \(\eta_0\) modulo \((25)\) and
\(\eta_\ell\) modulo \((\ell)\). Each of these anti-invariant character
spaces is one-dimensional, as verified below. Pulling both characters
back to the common Galois group, define the four mixed extensions by
\[
 M_\lambda=\overline{k}^{\ker(\eta_0+\lambda\eta_\ell)},
 \qquad \lambda\in\F_5^\times.
\]
Changing either chosen generator only relabels these four extensions.

\begin{theorem}\label{thm:main}
There is a set \(\mathcal P\) of rational primes of natural and Dirichlet density
\(1/50\), all congruent to one modulo twenty, with the following property.
For each \(\ell\in\mathcal P\), at least two of the four anti-invariant mixed
cyclic quintic ray class extensions \(M/k\) of conductor \((25\ell)\) satisfy
\[
 \Gal(M/\Q)\simeq D_{10},\qquad Q_2(M)=5,
 \qquad K_2(O_M)(5)\simeq(\Z/5)^2.
\]
Their discriminants are
\(\operatorname{disc}(M)=-2^{10}5^{16}\ell^8\).
If \(a=v_5(\ell-1)\) and \(S=S_0\cup\{v:v\mid\ell\}\), then
\[
 H^2(G_{M,S},\Z_5(2))\simeq(\Z/5^{a+1})^2.
\]
In particular, there are infinitely many pairwise non-isomorphic such fields
with the same quadratic subfield and the same degree over \(\Q\).
\end{theorem}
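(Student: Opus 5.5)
The proof splits into four stages: the dihedral structure and discriminant, the cohomology over \(E\) descended to \(M\), the tame kernel, and the index \(Q_2(M)\).

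\textbf{Dihedral structure and discriminant.} We begin with the elementary parts. For \(\ell\equiv1\pmod{20}\) the prime \(\ell\) splits in \(k\) as \(\mathfrak l\bar{\mathfrak l}\). The relevant ray class groups are
\[
(O_k/25)^\times/\langle i\rangle \quad\text{and}\quad (O_k/\ell)^\times/\langle i\rangle\simeq(\F_\ell^\times\times\F_\ell^\times)/\mu_4 .
\]
Complex conjugation swaps the two \(\F_\ell^\times\) factors, so each anti-invariant \(\F_5\)-character space is one-dimensional. Since \(h(k)=1\), an anti-invariant character cuts out a field \(M_\lambda\) on which conjugation acts by inversion, so \(\Gal(M_\lambda/\Q)\simeq D_{10}\). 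The conductor is exactly \((25\ell)\) because both components of \(\eta_0+\lambda\eta_\ell\) are nonzero. The conductor--discriminant formula then gives \(\operatorname{disc}(M)\). The factor \(2^{10}\) comes from \(d_k^5=(-4)^5\), and the remaining factors come from four nontrivial characters of conductor \(25\ell\), with norms \(5^4\) and \(\ell^2\). The sign \(-1\) comes from the five pairs of complex places.

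\textbf{Cohomology over \(E\) and descent.} Next we work over \(E=\Q(\zeta_{20})\), where \(\F_5(2)\simeq\F_5(0)\) after twisting by the Teichmüller character. The group \(H^2(G_{M,S},\Z_5(2))\) is computed by co-descent from \(ME/E\) to the \(\omega^2\)-isotypic part. Lemma~\ref{lem:base} gives vanishing of the base \(H^2(G_{k,S_0},\Z_5(2))\), the wild part. Tate's localization sequence then reduces the \(\ell\)-adic contribution to the two primes above \(\ell\) in \(k\). At each such prime we have \(H^0(k_v,\Q_5/\Z_5(1))\simeq\Z/5^{a}\). In \(M\) these primes are totally ramified, so after the quintic extension each contributes \(\Z/5^{a+1}\). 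This is where the generalized Bockstein maps enter. Their role is to show that the extension class of \(\eta_0+\lambda\eta_\ell\) raises the exponent by exactly one, rather than splitting off a \(\Z/5\) summand. Concretely, we compute a Bockstein of the character modulo \(5^{a+1}\) against the local units at \(\ell\). The Chebotarev condition defining \(\mathcal P\) is designed to force the relevant local Kummer symbols to be nonzero: \(\ell\equiv1\pmod{20}\) gives density \(1/8\), and a quintic residue condition on a fixed unit or element of \(E\) gives a further factor \(4/25\), for a total density of \(1/50\).

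\textbf{Tame kernel.} From \(H^2\), the tame kernel follows by Tate's identification
\[
K_2(O_M)(5)\simeq H^2(G_{M,S},\Z_5(2))\ \text{modulo local pieces}.
\]
For this we use the exact sequence
\[
0\to K_2(O_M)(5)\to H^2(G_{M,S},\Z_5(2))\to\bigoplus_{v\mid\ell}H^0(M_v,\Q_5/\Z_5(1)).
\]
In \(M\) there are two places above \(\ell\), each with local \(\mu_{5^\infty}\) of order \(5^{a}\). The cokernel computation therefore leaves \((\Z/5)^2\), provided the map is surjective. We plan to establish surjectivity by a cup-product or genus-formula count of \(5\)-ranks, namely a motivic analogue of Chevalley's formula for the \(D_{10}\)-extension \(M/k\).

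\textbf{The index \(Q_2(M)=5\) (main obstacle).} The crux is showing \(Q_2(M)=5\) rather than \(1\). The plan is to express
\[
\mathcal B(M)/(\mathcal B(k)+\mathcal B(F_1)+\mathcal B(F_2))\otimes\Z_5
\]
via integral co-descent as the cokernel of the norm or corestriction map on \(H^1(\,\cdot\,,\Z_5(2))\), the free quotient of \(K_3^{\mathrm{ind}}\). This cokernel should be governed by the Tate cohomology \(\hat H^0(\Gal(M/k),K_2(O_M)(5))\) or a related \(H^1\) obstruction. The two-dimensional cup-product pencil enters here. The pairing \(\lambda\mapsto(\eta_0+\lambda\eta_\ell)\cup(\cdot)\) is a quadratic form in \(\lambda\) over \(\F_5\). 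The index is \(5\) exactly when a certain cup product vanishes, that is, when the corresponding element becomes a norm. A nonzero quadratic polynomial over \(\F_5\) has at most two roots among the four values of \(\lambda\). We expect to show that the obstruction quadratic is nondegenerate and that its complement in \(\F_5^\times\) has at least two elements, which is the source of the "at least two" in the statement. Making the identification between the Bloch-lattice index and this cup-product quadratic precise is the hardest step. We would try first to use the Guo--Qin divisibility \(Q_2(M)\mid5\) together with a comparison of \(5\)-parts of Brauer--Kuroda quotients expressed cohomologically. That comparison reduces the problem to nontriviality of one \(\F_5\)-valued invariant.
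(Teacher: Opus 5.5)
Your outline names the right tools, but the chain that actually proves the theorem is missing, and two steps point the wrong way.

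\textbf{The prime set and the pencil.} The condition defining \(\mathcal P\) is that Frobenius at \(\ell\) is \emph{trivial} on the Tate-line extension \(E(\sqrt[5]{D})/E\) and \emph{nontrivial} on \(M_0E/E\). This selects \(4\) of the \(25\) elements of \(\Gal(L/E)\simeq C_5\times C_5\). A single nonvanishing quintic residue condition, as you describe, would give \(4/5\), not \(4/25\). Each half of the condition has a precise job that your proposal does not supply.
\begin{itemize}
\item The nonzero \(\eta_0\)-Frobenius at \(v_1,v_2\) makes \(\eta_0\cup-\) an invertible diagonal matrix \(C_0\) on lifts \(y_1,y_2\) of the two tame inertia residues in \(H^1(G_{k,S},\F_5(2))\). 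Hence \(\det(C_0+\lambda C_\ell)\) has nonzero constant term and at most two roots. This is exactly the ``nondegeneracy'' you leave as an expectation.
\item The vanishing Tate Frobenius gives \(t_2=0\) in the Assim--Movahhedi genus formula, hence \(|W_G|=25\).
\end{itemize}
Also, the pencil does not detect the index directly. A nonzero determinant gives surjectivity of \(\eta_\lambda\cup-\) onto \(H^2(G_{k,S},\F_5(2))\simeq\F_5^2\). Your claim that the index is \(5\) exactly when a cup product vanishes contradicts wanting index \(5\) off the root set.

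\textbf{The groups \(B\) and \(W\), and the index.} Your route to \(B=H^2(G_{M,S},\Z_5(2))\) does not work. The primes over \(\ell\) are totally ramified with residue field \(\F_\ell\), so the local terms in \(M\) are still \(\Z/5^a\); nothing ``contributes \(\Z/5^{a+1}\)''. Your tame-kernel step then presupposes the structure of \(B\). The missing argument runs as follows.
\begin{itemize}
\item Let \(I\) be the augmentation ideal of \(\F_5[C_5]\) and \(A=H^2(G_{M,S},\F_5(2))\). Surjectivity of the cup product and the generalized Bockstein theorem of Lam--Liu--Sharifi--Wake--Wang give \(IA=I^2A\). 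Hence \(IA=0\) and \(A\simeq A_G\simeq\F_5^2\), so \(B\) is two-generated.
\item Integral co-descent gives \(B_G\simeq(\Z/5^a)^2\) with \((\rho-1)B=W\). This forces \(W=5^aB\), \(5^{a+1}B=0\), \(N_G=5\), and trivial action on \(W\).
\item Then \(|W|=|W_G|=25\) pins down \(B\simeq(\Z/5^{a+1})^2\) and \(W\simeq(\Z/5)^2\).
\item The relation \(\res\circ\cor=N_G=5\), plus order counting, gives \(\coker f_2=0\).
\end{itemize}
For the index, the bridge is
\[
\coker f_2\simeq H^2(C_5,\Lambda)=\Lambda^{C_5}/N_G\Lambda,\qquad \Lambda=K_3^{\mathrm{ind}}(M)\otimes\Z_5\simeq H^1(G_{M,S},\Z_5(2)).
\]
It is not a Tate cohomology group of \(K_2(O_M)(5)\). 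Vanishing of this group forces \(\Lambda\simeq\Z_5[C_5]\). An explicit computation then gives the index: normalize a reflection to act by \(rx\mapsto-\bar r x\), and note that two reflection-invariant sublattices generate the augmentation ideal. The quotient is \(\Z_5[C_5]/(I+\Z_5N_G)\simeq\Z/5\). Guo--Qin's divisibility only removes the prime-to-five part of the index; the value \(5\) has to come from this lattice calculation.
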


The theorem permits a choice among four actual ray class fields for each
parameter. It does not assert that an arbitrary preselected ray has index five.
The density is a density of parameter primes, not a density ordered by field
discriminant.

There is nevertheless a direct lower bound when the fields are ordered by
discriminant. Let \(N(X)\) count the \(\Q\)-isomorphism classes of
\(D_{10}\)-fields \(M\) with quadratic subfield \(\Q(i)\),
\(Q_2(M)=5\), \(K_2(O_M)(5)\simeq(\Z/5)^2\), and
\(|\operatorname{disc}(M)|\leq X\).
\begin{corollary}\label{cor:count}
With \(C=2^{10}5^{16}\), one has
\[
 \liminf_{X\longrightarrow\infty}
 \frac{N(X)\log X}{X^{1/8}}\ \geq\ \frac{8}{25C^{1/8}}.
\]
\end{corollary}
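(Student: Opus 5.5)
We derive the Corollary from Theorem~\ref{thm:main}, counting only fields in the family it produces. For \(\ell\in\mathcal P\) the theorem gives at least two fields \(M\) with the required properties. Each has \(|\operatorname{disc}(M)|=C\ell^8\), so the condition \(|\operatorname{disc}(M)|\le X\) is the same as \(\ell\le (X/C)^{1/8}\). Hence
\[
 N(X)\ \ge\ \frac{1}{m}\cdot 2\,\#\{\ell\in\mathcal P:\ \ell\le (X/C)^{1/8}\},
\]
where \(m\) bounds how many primes \(\ell\) can produce \(\Q\)-isomorphic fields. Here \(m=1\): the discriminant \(C\ell^8\) determines \(\ell\), so fields attached to different primes are non-isomorphic. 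We still need that the two fields attached to the same \(\ell\) are non-isomorphic over \(\Q\). They are distinct cyclic quintic extensions of \(k\), that is, distinct subfields of \(\overline{\Q}\). Since each \(M\) is Galois over \(\Q\), \(\Q\)-isomorphic fields coincide as subfields. So the two fields give two distinct isomorphism classes.

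Next we use the natural density \(1/50\) of \(\mathcal P\) among primes, together with the prime number theorem. With \(Y=(X/C)^{1/8}\),
\[
 \#\{\ell\in\mathcal P:\ell\le Y\}\sim \frac{1}{50}\,\frac{Y}{\log Y}.
\]
The density statement is relative to \(\pi(Y)\), so it combines directly with the prime number theorem. Also \(\log Y=\tfrac18\log X-\tfrac18\log C\sim \tfrac18\log X\). Therefore
\[
 N(X)\ \ge\ 2\cdot\frac{1}{50}\cdot\frac{8\,X^{1/8}}{C^{1/8}\log X}\,(1+o(1))
 =\frac{8}{25\,C^{1/8}}\,\frac{X^{1/8}}{\log X}\,(1+o(1)).
\]
Multiplying by \(\log X/X^{1/8}\) and taking \(\liminf\) gives the stated bound.

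There is no real obstacle here; the arithmetic content is all in Theorem~\ref{thm:main}. The only point needing care is that the count is by \(\Q\)-isomorphism classes, and the discriminant argument above settles it. The density of \(\mathcal P\) is natural density, not only Dirichlet density, which is what allows the asymptotic along \(Y\) rather than a merely logarithmic-average statement.
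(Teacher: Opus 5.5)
Your proposal is correct and follows the paper's proof essentially step for step. Both count at least two fields per \(\ell\in\mathcal P\) with \(\ell\le (X/C)^{1/8}\), separate them by normality over \(\Q\) within a fixed algebraic closure and by distinct discriminants across primes, and then apply Chebotarev with the prime number theorem at \(Y=(X/C)^{1/8}\); your constant \(2\cdot\frac{1}{50}\cdot 8=\frac{8}{25}\) and the asymptotic \(\log Y\sim\frac18\log X\) match.
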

\begin{proof}
Every \(\ell\in\mathcal P\) with \(\ell\leq(X/C)^{1/8}\)
contributes at least two fields. Distinct mixed characters give distinct
cyclic extensions over \(\Q(i)\); these are normal over \(\Q\), so
distinct such fields in a fixed algebraic closure are not
\(\Q\)-isomorphic. Distinct parameter primes give different
discriminants. Chebotarev and the prime number theorem give
\(\#\{\ell\in\mathcal P:\ell\leq Y\}\sim Y/(50\log Y)\).
Substituting \(Y=(X/C)^{1/8}\) proves the inequality.
\end{proof}

\paragraph{Proof strategy.}
The fixed Tate-line extension and a fixed conductor-25 dihedral seed define
the parameter set; their distinct Galois characters give density \(1/50\).
The seed cup product is invertible at the two new tame primes. Mixing the
seed with a tame character produces a two-dimensional matrix pencil whose
determinant has at most two zeros. For a remaining line, the first
generalized Bockstein map controls mod-five co-descent. Integral co-descent,
localization, and the motivic genus formula determine the five-primary
tame kernel and make the integral restriction cokernel vanish. Finally, an
explicit integral dihedral-lattice calculation converts that vanishing
into the Bloch lattice index five.

The argument does not require a distribution theorem for symbols attached
to generators of varying prime ideals. Determining how often all four
mixed cup products are surjective is a separate problem and is not used
in the theorem or in Corollary~\ref{cor:count}.

\section{The fixed extensions and their Chebotarev set}
We use the integral weight-two comparison for odd-primary tame kernels,
via the comparison in \cite[Section 1.3]{AM19} and localization as in
\cite[Section 1.1]{AM12}. Explicitly, if
\(S\) contains the five-primes and the infinite places, then
\[
 K_2(O_J[S^{-1}])(5)\simeq H^2(G_{J,S},\Z_5(2)).
\]
For \(S\) consisting only of these places, inverting the five-primes does
not change the five-primary tame kernel: the removed residue groups have
orders \(5^f-1\), prime to five. We also use Levine's natural comparison
\cite[Theorem 4.5]{Levine} between completed indecomposable \(K_3\) and
weight-two integral \(H^1\).

\begin{lemma}\label{lem:base}
\(K_2(\Z[i])(5)=0\), and \(H^2(G_{k,S_0},\F_5(2))=0\).
\end{lemma}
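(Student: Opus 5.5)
The plan is to first establish the mod-five vanishing \(H^2(G_{k,S_0},\F_5(2))=0\) and then deduce \(K_2(\Z[i])(5)=0\) from the integral comparison \(K_2(O_k[S_0^{-1}])(5)\simeq H^2(G_{k,S_0},\Z_5(2))\) recalled above. For the deduction, I would use the long exact sequence attached to \(0\to\Z_5(2)\xrightarrow{5}\Z_5(2)\to\F_5(2)\to0\). Since \(\operatorname{cd}_5 G_{k,S_0}\le2\) (\(k\) is totally imaginary), \(H^2(G_{k,S_0},\Z_5(2))/5\) injects into \(H^2(G_{k,S_0},\F_5(2))\). Hence the finitely generated \(\Z_5\)-module \(H^2(G_{k,S_0},\Z_5(2))\) vanishes by Nakayama. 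Inverting the prime above five changes nothing, since the residue field has order \(5\) and \(5-1\) is prime to five, so \(K_2(\Z[i])(5)=0\).

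For the mod-five statement, I would descend from \(E=k(\mu_5)=\Q(\zeta_{20})\). The degree \([E:k]=4\) is prime to five, so restriction identifies
\[
H^2(G_{k,S_0},\F_5(2))\simeq H^2(G_{E,S_0},\F_5(2))^{\Delta},\qquad \Delta=\Gal(E/k).
\]
Over \(E\) we have \(\F_5(2)\simeq\mu_5\otimes\omega\) as \(\Delta\)-modules, where \(\omega\) is the Teichmüller (mod-five cyclotomic) character restricted to \(\Delta\). Hence \(H^2(G_{E,S_0},\F_5(2))\simeq H^2(G_{E,S_0},\mu_5)\otimes\omega\). The Kummer sequence gives
\[
0\to \operatorname{Cl}(O_E[1/5])/5\to H^2(G_{E,S_0},\mu_5)\to \bigoplus_{v\mid5}\Z/5\xrightarrow{\Sigma}\Z/5\to0
\]
(the archimedean places are complex and contribute nothing). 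The field \(\Q(\zeta_{20})\) has class number one, so the class group term vanishes. There is a single prime above five in \(E\): five is totally ramified in \(\Q(\zeta_5)\), and the prime above \(5\) in \(\Q(\zeta_5)\) is inert in \(E\) because \(5\equiv1\pmod 4\) fails to split \(\ldots\) Whatever the splitting turns out to be, I would instead argue uniformly. The primes of \(E\) above five correspond to the primes of \(k\) above five, and there are two of them, \((2+i)\) and \((2-i)\). These are fixed by \(\Delta=\Gal(E/k)\). So \(\bigoplus_{v\mid5}\Z/5\) carries trivial \(\Delta\)-action, and the kernel of \(\Sigma\) is a trivial \(\Delta\)-module of rank at most one.

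Therefore \(H^2(G_{E,S_0},\mu_5)\) is a trivial \(\Delta\)-module, and after twisting by \(\omega\) its \(\Delta\)-invariants vanish provided \(\omega|_\Delta\neq1\). This holds because \(\Delta=\Gal(\Q(\zeta_{20})/\Q(i))\simeq\Gal(\Q(\zeta_5)/\Q)\) acts faithfully on \(\mu_5\), so \(\omega|_\Delta\) has order four, and therefore \(\omega\ne1\) on \(\Delta\). This gives \(H^2(G_{k,S_0},\F_5(2))=0\).

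The main obstacle is bookkeeping rather than substance: the twist identification over \(E\), where \(\F_5(1)=\mu_5\), and the verification that the local sum has trivial \(\Delta\)-action. As a cross-check, I would use the Euler characteristic formula \(\dim H^1-\dim H^2=r_2=1\) for \(\F_5(2)\) over \(k\), together with \(H^0=0\) (\(\mu_5\not\subset k\)). This should confirm that \(\dim H^1(G_{k,S_0},\F_5(2))=1\), which matches the one-dimensional Tate line used later. The input \(h(\Q(\zeta_{20}))=1\) is classical (Masley–Montgomery).
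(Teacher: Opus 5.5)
Your proposal is correct and follows the same route as the paper. Class number one for \(\Q(\zeta_{20})\) and the Kummer--Brauer sequence make \(H^2(G_{E,S_0},\mu_5)\) a trivial \(\Delta\)-module, the twist by the order-four character \(\omega|_\Delta\) kills its \(\Delta\)-invariants, prime-to-five descent gives the vanishing over \(k\), and the integral comparison with Nakayama gives \(K_2(\Z[i])(5)=0\); just delete the abandoned remark about a single inert five-prime in \(E\), which is false, since \(5\) splits in \(\Q(i)\) and \(\Q_5(\zeta_5)/\Q_5\) is totally ramified of degree four, so \(E\) has exactly two five-primes, each totally ramified over \((2\pm i)\), and this is exactly why \(\Delta\) fixes them (likewise \(k\) has two five-primes, not one, in your first paragraph, which is harmless).
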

\begin{proof}
The field \(E=\Q(\zeta_{20})\) has class number one; the accompanying
fixed-field certificate verifies this fact independently. Kummer theory gives
\(H^2(O_E[1/5],\mu_5)=\operatorname{Br}(O_E[1/5])[5]\).
The Brauer invariant sequence identifies this group with the kernel of
\(\F_5^2\xrightarrow{+}\F_5\), one coordinate for each five-prime.
Both primes are fixed by \(\Delta=\Gal(E/k)\), and local invariants show
that \(\Delta\) acts trivially on this kernel. Twisting by
\(\F_5(1)\) therefore gives a nontrivial cyclotomic character on
\(H^2(G_{E,S_0},\F_5(2))\). Its \(\Delta\)-invariants vanish.
Since \(|\Delta|=4\), Hochschild--Serre identifies those invariants with
\(H^2(G_{k,S_0},\F_5(2))\). The integral weight-two comparison with the
tame kernel, reduction modulo five, and finiteness then give the asserted
five-primary vanishing.
\end{proof}

The five-primary character space of the ray class group of \(k\) modulo
\((25)\) has dimension two. Complex conjugation exchanges the two primes above
five. Its negative eigenspace has dimension one. Fix a nonzero character
\(\eta_0\) in this eigenspace, with fixed field \(M_0\). Thus \(M_0/k\) is
cyclic of degree five and conductor \((25)\), and \(M_0/\Q\) is dihedral.
This uses \(h(k)=1\) and \(|\mu(k)|=4\).

Let \(T=\F_5(2)\). The integral group
\(H^1(G_{k,S_0},\Z_5(2))\) is free of rank one over \(\Z_5\): its rank is
the Borel rank \(r_2(k)=1\), and its torsion is
\(H^0(k,\Q_5/\Z_5(2))=0\), since \(\sqrt5\notin k\).
The vanishing of the integral \(H^2\) and Bockstein give
\[
 H^1(G_{k,S_0},\Z_5(2))/5\simeq H^1(G_{k,S_0},T).
\]
Restriction to \(E\) is injective, because \([E:k]=4\) is prime to five.
The Kummer comparison
\(H^1(G_{E,S_0},T)\simeq H^1(G_{E,S_0},\mu_5)\otimes\F_5(1)\)
turns its image into a
one-dimensional nonzero subgroup \(D\subset E^\times/E^{\times5}\).
This is the weight-two Tate line of \cite{AM19}. It is unramified outside five
in the sense that \(E_T=E(\sqrt[5]{D})/E\) is unramified outside five.

Let \(\Delta=\Gal(E/k)\simeq C_4\), and let \(\omega\) be its cyclotomic
character modulo five. The line \(D\) has character \(\omega^{-1}\).
Consequently, the Kummer pairing with target \(\mu_5\) shows that
\(\Delta\) acts on \(\Gal(E_T/E)\) with character \(\omega^2\).
On \(\Gal(M_0E/E)\), the action is trivial, since \(M_0E/k\) is abelian.
Thus the two cyclic quintic extensions of \(E\) are different and linearly
disjoint. The Tate line is also stable under conjugation over \(\Q\);
both factors, and hence
\[
 L=E_T M_0,
\]
are normal over \(\Q\). We have \([L:\Q]=8\cdot25=200\).

Let \(\mathcal P\) consist of the unramified rational primes whose Frobenius
conjugacy class in \(L/\Q\) lies in
\(\{0\}\times(C_5\setminus\{0\})\subset\Gal(L/E)\), with coordinates
corresponding to \(E_T/E\) and \(M_0E/E\), respectively.
In \(\Gal(L/E)=C_5\times C_5\), this condition selects exactly four elements.
It is invariant under conjugation, because both factors are normal over
\(\Q\). Chebotarev gives density \(4/200=1/50\).
Such primes split completely in \(E\), so \(\ell\equiv1\pmod{20}\).
For either prime of \(k\) above \(\ell\), the unramified character
\(\eta_0\) has nonzero Frobenius value, and the Tate Frobenius is zero.
Different choices of a prime of \(E\) change Frobenius by a nonzero scalar
conjugation; its vanishing and the dimension of its span are unchanged.

\section{A cup-product pencil on the fixed base field}
Fix \(\ell\in\mathcal P\), and let \(v_1,v_2\) be the primes of \(k\) over
\(\ell\). The anti-invariant order-five character space modulo \((\ell)\)
is one-dimensional. Choose a nonzero character \(\eta_\ell\) there.
For \(\lambda\in\F_5^\times\), put
\[
 \eta_\lambda=\eta_0+\lambda\eta_\ell,
 \qquad M_\lambda=\overline{k}^{\ker\eta_\lambda}.
\]
The two characters have different ramification sets and are independent.
These are four different cyclic quintic layers, each fully ramified at both
five-primes and both \(\ell\)-primes and with conductor \((25\ell)\).
Conjugation negates each character, so each field is dihedral over \(\Q\).
The conductor--discriminant formula gives
\[
 \operatorname{disc}(M_\lambda)
 =(-4)^5\N_{k/\Q}((25\ell)^4)
 =-2^{10}5^{16}\ell^8.
\]

Set \(\mathcal G=G_{k,S}\), with \(S=S_0\cup\{v_1,v_2\}\).
The base group \(H^2(G_{k,S_0},T)\) is zero. Localization and purity give
\begin{align}
 H^1(\mathcal G,T)&\longrightarrow
 \bigoplus_{j=1}^2 H^0(\kappa(v_j),\F_5(1))\longrightarrow0,\label{eq:inertia}\\
 H^2(\mathcal G,T)&\xrightarrow{\sim}
 \bigoplus_{j=1}^2 H^2(k_{v_j},T)\simeq\F_5^2.\label{eq:h2loc}
\end{align}
In \eqref{eq:inertia}, each summand is \(\F_5\), since
\(\ell\equiv1\pmod5\), and the map records tame inertia residues.
Surjectivity follows from the preceding zero \(H^2(G_{k,S_0},T)\).
For \eqref{eq:h2loc}, localization identifies the new finite-field
\(H^1(\kappa(v_j),\F_5(1))\) with the local \(H^2\); the next global
\(H^3\) is zero. The five-adic local \(H^2\) groups vanish because those
completions do not contain \(\mu_5\).

Choose \(y_1,y_2\in H^1(\mathcal G,T)\) mapping to the standard basis in
\eqref{eq:inertia}. The character \(\eta_0\) is unramified at these primes,
with Frobenius values \(u,-u\), where \(u\ne0\). Locally, cup product pairs
an unramified \(\F_5\)-character perfectly with the one-dimensional tame
inertia residue in \(H^1(T)\); the cup of two unramified classes is zero.
Consequently \(\eta_0\cup-\), restricted to the span of \(y_1,y_2\), has
an invertible diagonal matrix \(C_0\) in \eqref{eq:h2loc}.

Let \(C_\ell\) be the matrix for \(\eta_\ell\cup-\) on the same two
vectors. For \(\eta_\lambda\), the matrix is
\[
 C_\lambda=C_0+\lambda C_\ell.
\]
Its determinant is a polynomial of degree at most two over \(\F_5\), with
nonzero constant coefficient. It has at most two roots. Therefore at least
two of the four nonzero \(\lambda\) satisfy
\begin{equation}\label{eq:cup}
 \eta_\lambda\cup-:H^1(\mathcal G,T)\twoheadrightarrow H^2(\mathcal G,T).
\end{equation}
No assertion about the distribution of residue symbols of varying prime
generators is required: \(C_\ell\) may depend arbitrarily on \(\ell\).

\section{From cup product to integral co-descent}
Fix a line satisfying \eqref{eq:cup}. Write \(M=M_\lambda\),
\(\mathcal N=\ker\eta_\lambda=G_{M,S}\), and
\(G=\mathcal G/\mathcal N=\langle\rho\rangle=C_5\).
Put \(\Omega=\F_5[G]\), \(I=(\rho-1)\), and \(A=H^2(\mathcal N,T)\).
Theorem 2.2.4 of Lam--Liu--Sharifi--Wake--Wang \cite{LLSWW}, with
\(R=\F_5\), \(n=1\), and finite \(H=G\), gives
\[
 IA/I^2A\simeq\coker\bigl(H^1(\mathcal G,T)
 \xrightarrow{\,\eta_\lambda\cup-\,}H^2(\mathcal G,T)\bigr).
\]
The group \(\mathcal G\) has five-cohomological dimension two and finite
cohomology with finite five-primary coefficients. The coefficient ring
\(\F_5\) is complete local Noetherian, and the finite group \(C_5\) is a
compact five-adic Lie group. Thus the hypotheses of \cite[Section 2.2]{LLSWW}
hold; the required flatness is automatic. We identify \(I/I^2\) with
\(\F_5\) using the class of \(\rho-1\).
The identification of \(\mathcal N\) follows because \(M/k\) is unramified
outside \(S\), and taking the \(k\)-normal closure of an extension of
\(M\) unramified outside \(S\) introduces no ramification outside \(S\).
For finite \(H\), its Iwasawa cohomology is ordinary subgroup cohomology
\cite[Remark 2.2.1]{LLSWW}; the first Bockstein is the cup product
\cite[Proposition 2.3.3]{LLSWW}.
Equation \eqref{eq:cup} gives \(IA=I^2A\). Since \(I^5=0\), it gives
\(IA=0\). Top-degree corestriction gives
\(A_G\simeq H^2(\mathcal G,T)\), by \cite[Equation (2.4)]{LLSWW}.
Consequently \(A\simeq\F_5^2\).

Put
\[
 B=H^2(G_{M,S},\Z_5(2)),\quad W=K_2(O_M)(5),
 \quad a=v_5(\ell-1),\quad R=(\Z/5^a)^2.
\]
These integral cohomology groups are finite. Cohomological dimension at most
two and the Mittag--Leffler property for finite coefficient groups give
continuous \(H^3(G_{M,S},\Z_5(2))=0\). Thus Bockstein gives
\(B/5B\simeq A\), so the minimal number of \(\Z_5\)-generators of
\(B\) is two.

The integral localization sequence is
\begin{equation}\label{eq:integralloc}
 0\longrightarrow W\longrightarrow B
 \xrightarrow{\lambda_{\mathrm{loc}}}R\longrightarrow0.
\end{equation}
The base group \(H^2(G_{k,S},\Z_5(2))\) is also \(R\), because its tame
kernel has zero five-primary part. Integral co-descent
\cite[Theorem 1.5 and the following paragraph]{AM12} identifies
\(\cor:B_G\xrightarrow{\sim}R\).
The localization diagram is natural: on residue multiplicative groups the
corestriction is the residue-field norm, while restriction is the
ramification-index power. Here \(f=1\) and \(e=5\), so, in matched residue
coordinates,
\begin{equation}\label{eq:transfer}
 q:=\cor=\lambda_{\mathrm{loc}},\qquad
 (\rho-1)B=W,\qquad \res\,q=N_G.
\end{equation}
More explicitly, these maps occur in the commutative diagram
\[
\begin{array}{ccccccccc}
0&\longrightarrow&W&\longrightarrow&B&\longrightarrow&R_M&\longrightarrow&0\\
 &&\downarrow&&\downarrow\cor&&\downarrow\N_{\rm res}\\
0&\longrightarrow&0&\longrightarrow&H^2(G_{k,S},\Z_5(2))&\xrightarrow{\sim}&R_k&\longrightarrow&0.
\end{array}
\]
Here \(R_M\) and \(R_k\) are the two residue terms, both identified with
\(R\). Their residue fields agree, so \(\N_{\rm res}\) is the identity.
Restriction on these terms is multiplication by five, since the
ramification index is five.

\begin{lemma}\label{lem:integral}
With the preceding notation, \(W=5^aB\), \(5^{a+1}B=0\),
\(N_G=5\operatorname{id}_B\), and \(G\) acts trivially on \(W\).
\end{lemma}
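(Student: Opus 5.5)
The plan is to work entirely with the operator \(\delta=\rho-1\) on the finite \(\Z_5[G]\)-module \(B\). I would combine two inputs: the mod-five triviality \(IA=0\) from the preceding section, and the relations \eqref{eq:transfer}, namely \((\rho-1)B=W\) and \(B/W\simeq R=(\Z/5^a)^2\). Throughout, \(a\geq1\), because \(\ell\equiv1\pmod{20}\).

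\textbf{Step 1: \(\delta B\subset5B\).} The Bockstein isomorphism \(B/5B\simeq A\) is \(G\)-equivariant, and \(G\) acts trivially on \(A\) by the argument giving \(IA=0\). Hence \(\delta B\subset 5B\), that is, \(W\subset5B\). Since \(\delta\) commutes with multiplication by \(5\), this gives \(\delta^{j+1}B\subset5\delta^{j}B\) for all \(j\ge0\). In particular \(\delta W=\delta^2B\subset5W\).

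\textbf{Step 2: \(5W=0\).} I would expand \((1+\delta)^5=1\) to obtain
\[
 \delta^5=-5\delta\,u,\qquad u=1+2\delta+2\delta^2+\delta^3 .
\]
By Step 1, \(\delta\) is topologically nilpotent on the finite \(5\)-group \(B\), so \(u\) is an automorphism of \(B\) commuting with \(\delta\). It follows that \(5W=5\delta B=-\delta^5u^{-1}B=\delta^5B\). Applying Step 1 repeatedly gives \(\delta^5B\subset 5^2\delta^3B\subset\cdots\subset25\,\delta B=25W\). Thus \(5W\subset25W\), and finiteness of \(W\) forces \(5W=0\). I expect this to be the main obstacle: the natural first idea of writing \(\delta=5\varepsilon\) for an endomorphism \(\varepsilon\) is awkward when the two cyclic factors of \(B\) have different orders, and the filtration argument above avoids that difficulty.

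\textbf{Step 3: structure of \(B\) and \(W=5^aB\).} Since \(B/W\simeq(\Z/5^a)^2\), we have \(5^aB\subset W\). Together with Step 2 this gives \(5^{a+1}B=0\). The minimal number of generators of \(B\) is two, since \(B/5B\simeq A\simeq\F_5^2\). Hence \(B\simeq\Z/5^{b_1}\oplus\Z/5^{b_2}\), where each \(b_i\) lies in \(\{a,a+1\}\) because \(B\) surjects onto \(R\) and \(5^{a+1}B=0\). Counting orders, \(|W|=|B|/5^{2a}=5^{(b_1-a)+(b_2-a)}=|5^aB|\). Combined with \(5^aB\subset W\), this yields \(W=5^aB\).

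\textbf{Step 4: action on \(W\) and the norm.} Using Step 3 and \(a\ge1\), we get \(\delta W=\delta(5^aB)=5^a\delta B=5^aW=0\), so \(G\) acts trivially on \(W\). Consequently \(\delta^2B=\delta W=0\), and the norm expands as
\[
 N_G=\sum_{i=0}^4(1+\delta)^i=5+10\delta+10\delta^2+5\delta^3+\delta^4=5(1+2\delta).
\]
Finally, \(5\delta B=5W=0\), so \(N_G=5\operatorname{id}_B\).

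As a consistency check, this agrees with \(\res\,q=N_G\) in \eqref{eq:transfer}, since restriction acts as multiplication by five on the residue terms.
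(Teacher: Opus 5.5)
Your proof is correct, but it is organized differently from the paper's.

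The paper does the order count first. Because \(B\) is two-generated and surjects onto \((\Z/5^a)^2\), one gets \(B/5^aB\simeq R\), and hence \(W=\ker q=5^aB\) at once, without knowing the exponent of \(B\). It then substitutes \(tB=5^aB\) and \(t^jB=5^{aj}B\) into \(0=5t+10t^2+10t^3+5t^4+t^5\). For \(a\ge1\), every term other than \(5t\) maps \(B\) into \(5^{a+2}B\), so \(5^{a+1}B\subseteq5^{a+2}B\) and hence \(5^{a+1}B=0\). The facts \(t^2B=0\), \(N_G=5\) and \(tW=0\) then follow as in your Step 4.

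You instead derive \(\delta B\subseteq5B\) from an additional input: the \(G\)-equivariance of \(B/5B\simeq A\) combined with \(IA=0\). You then prove a general fact: every finite \(\Z_5[C_5]\)-module with \((\rho-1)B\subseteq5B\) satisfies \(5(\rho-1)B=0\). This holds independently of \(a\) and of the localization data, and only afterwards do you count orders.

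Each route has an advantage. Yours isolates a clean module-theoretic statement, and it gives triviality of the action on \(W\) without using \(a\ge1\), since \(\delta W=\delta^2B\subseteq5\delta B=0\). The paper's route is shorter and uses nothing about the \(G\)-action on \(A\) beyond the generator count.

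Your Step 3 does not actually need \(5^{a+1}B=0\). The bound \(b_i\ge a\) from the surjection already gives \(|5^aB|=|W|\). If you ran Step 3 first, the equivariance argument of Step 1 would become redundant, because then \(\delta B=W=5^aB\subseteq5B\) for \(a\ge1\). That is exactly the paper's ordering.
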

\begin{proof}
A surjection from a finite abelian five-group with two generators onto
\((\Z/5^a)^2\) forces both cyclic factors to have exponent at least \(a\).
Hence \(|B/5^aB|=|R|\). Since \(5^aB\subset\ker q\), comparison of orders
in \eqref{eq:integralloc} gives \(W=5^aB\).
Let \(t=\rho-1\). Then \(tB=5^aB\) and \(t^jB=5^{aj}B\).
The identity
\[
 0=5t+10t^2+10t^3+5t^4+t^5
\]
implies \(5^{a+1}B\subset5^{a+2}B\), as every other term has image contained
in the latter group when \(a\ge1\). Finiteness forces \(5^{a+1}B=0\).
Thus \(t^2B=0\); in
\(N_G=5+10t+10t^2+5t^3+t^4\), all nonconstant terms vanish, including
\(10tB\subset5^{a+1}B\). Finally
\(tW=5^atB=5^{2a}B=0\).
\end{proof}

\section{The genus formula and the actual Bloch lattice}
The motivic tame-kernel genus formula
\cite[Proposition 4.11 and Corollary 4.12]{AM19} gives
\[
 \frac{|W_G|}{|K_2(O_k)(5)|}=5^{2-t_2}.
\]
Here \(t_2\) is the dimension of the span of the two tame Frobenius elements
in the fixed extension \(E_T/E\). It depends only on the weight-two Tate line
of \(k\) and the two tame ramified primes. All four mixed lines have the same
tame set and nonzero local characters; changing \(\lambda\) multiplies a tame
character by a nonzero scalar and does not change its norm kernel.
The extension is cyclic of prime degree five, and \(i=2\) in the cited
formula. Its two tame ramified primes are \(v_1,v_2\); the wild five-primes
belong to \(S_5\) and are excluded from \(S\setminus S_5\). Proposition
4.11 identifies the norm index with the Frobenius-span dimension \(t_2\).
Both Frobenius elements vanish by the definition of \(\mathcal P\), so
\(t_2=0\) for all four mixed extensions.

Lemma \ref{lem:integral} makes \(W_G=W\). The genus formula gives \(|W|=25\),
and \(5W=0\). Both cyclic factors of \(B\) must consequently have exponent
\(a+1\). The equality \(\res q=5\operatorname{id}_B\), with \(q\) surjective,
gives \(\res(R)=5B\). These groups have equal order, so restriction is
injective. For a finite cyclic action \(|B^G|=|B_G|=|R|\), so restriction is
also surjective onto \(B^G\). In particular the integral restriction
\begin{equation}\label{eq:f2}
 f_2:H^2(G_{k,S},\Z_5(2))\longrightarrow B^G
\end{equation}
has zero cokernel.

We include the lattice consequence to make the link to \(Q_2\) explicit.
It uses no special-value conjecture.
\begin{proposition}\label{prop:lattice}
Let \(M/\Q\) be a non-real \(D_{10}\)-extension and, within this
proposition, put \(k=M^{C_5}\). Let \(S\) contain the infinite places, the
five-primes, and all primes ramified in \(M/k\). If the restriction map
\[
 f_2:H^2(G_{k,S},\Z_5(2))\longrightarrow H^2(G_{M,S},\Z_5(2))^{C_5}
\]
has zero cokernel, then \(Q_2(M)=5\).
\end{proposition}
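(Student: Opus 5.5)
We will reduce \(Q_2(M)\) to a five-adic index, describe the dihedral module structure of the free quotient, and then use the weight-two comparison to turn the cokernel of \(f_2\) into the index.

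\textbf{Step 1 (five-adic reduction).} By \cite[Theorem 4.3]{GQ}, \(Q_2(M)\mid5\). So it suffices to show that the index is divisible by five. We may tensor with \(\Z_5\). Put \(\mathcal B_5(J)=\mathcal B(J)\otimes\Z_5\). Then \(Q_2(M)=5\) exactly when \(\mathcal B_5(k)+\mathcal B_5(F_1)+\mathcal B_5(F_2)\) is a proper sublattice of \(\mathcal B_5(M)\).

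\textbf{Step 2 (lattice structure).} Let \(D_{10}=C_5\rtimes\langle\sigma\rangle\). Since \(M\) is non-real and \(k\) is imaginary, the Borel ranks are \(r_2(M)=5\), \(r_2(k)=1\) and \(r_2(F_j)=2\). Hence \(\mathcal B(M)\otimes\Q\) is the induced representation \(\operatorname{Ind}_{\langle\sigma\rangle}^{D_{10}}(\mathrm{sgn})\). Its invariants under \(\langle\sigma\rangle\) and under the reflections \(\sigma_j\) give the images of \(\mathcal B(F_j)\), and its invariants under \(C_5\) give the image of \(\mathcal B(k)\). Galois descent for the free quotient of \(K_3^{\mathrm{ind}}\) identifies \(\mathcal B(J)\) with \(\mathcal B(M)^{\Gal(M/J)}\) up to torsion, and this holds already integrally at five, because \(H^1\) of \(C_2\) has no five-torsion. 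The five-adic lattice \(X=\mathcal B_5(M)\) is a \(\Z_5[D_{10}]\)-lattice of rank five. Over \(\Z_5[C_5]\), its sign-twisted structure is either free of rank one or the sum \(\Z_5\oplus I_{C_5}\). The standard dihedral lattice computation, of Brauer--Kuroda unit-index type, shows the following. The index of \(X^{C_5}+X^{\sigma_1}+X^{\sigma_2}\) in \(X\) is \(5\) when \(X\) is \(\Z_5[C_5]\)-free, and \(1\) when it is the non-free lattice. The non-free case is exactly the one in which the restriction from \(X^{C_5}\) to the image of the norm has nontrivial cokernel. So it is enough to show that \(X\) is free, equivalently that \(\hat H^0(C_5,X)=X^{C_5}/N_GX\) has order \(5\).

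\textbf{Step 3 (comparison with \(H^2\)).} Levine's comparison \cite[Theorem 4.5]{Levine} identifies \(X\) with the free quotient of \(H^1(G_{M,S},\Z_5(2))\), compatibly with restriction and corestriction. This is legitimate because the torsion is \(H^0(M,\Q_5/\Z_5(2))\), which is trivial or prime-to-\(C_5\)-relevant here. Next we run Hochschild--Serre for \(\mathcal N\subset\mathcal G\) with coefficients \(\Z_5(2)\). The five-cohomological dimension is two, so \(H^3=0\), and the low-degree exact sequence gives
\[
 H^2(\mathcal G,\Z_5(2))\xrightarrow{f_2}H^2(\mathcal N,\Z_5(2))^{C_5}\to H^2(C_5,H^1(\mathcal N,\Z_5(2)))\to\ldots
\]
together with the Tate-cohomology comparisons. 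Concretely, the vanishing of \(\coker f_2\) together with the sequence forces \(H^2(C_5,H^1)=\hat H^0(C_5,H^1)\) to take the maximal value allowed by the Herbrand quotient. That Herbrand quotient is computed from the ranks and the local terms, and the maximal value is \(\hat H^0\simeq\Z/5\), which matches the free module.

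\textbf{Main obstacle.} The obstacle is the exact bookkeeping in Step 3. One must match \(\hat H^0(C_5,X)\) with the term following \(f_2\) in the spectral sequence, with the correct sign twist under \(\sigma\), and check that the torsion and local \(S\)-terms do not disturb the count. I would try first to compute both sides via Herbrand quotients. On the lattice side the quotient is \(h(X)=5^{(r_0-r_1)/4}\)-type data. On the cohomology side, \(\coker f_2=0\) should pin down \(\hat H^{-1}\) as zero, and hence \(\hat H^0=\Z/5\). That forces freeness, and Step 2 then gives \(Q_2(M)=5\).
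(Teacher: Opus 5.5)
Your Step 2 inverts the cohomological criterion for freeness. A free \(\Z_5[C_5]\)-module is cohomologically trivial, so \(X\) is free exactly when \(\hat H^0(C_5,X)=X^{C_5}/N_GX\) vanishes. It is the non-free lattice \(\Z_5\oplus I_{C_5}\) whose trivial summand gives \(\hat H^0\simeq\Z/5\); your own preceding sentence says the same, so the ``equivalently'' contradicts it.

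Step 3 then draws the wrong conclusion from the sequence you wrote down. Since \(H^0(\mathcal N,\Z_5(2))=0\), the bottom row \(E_2^{p,0}\) of the spectral sequence vanishes. Since \(H^3(\mathcal G,\Z_5(2))=0\), the map \(H^2(\mathcal N,\Z_5(2))^{C_5}\to H^2(C_5,H^1(\mathcal N,\Z_5(2)))\) is onto. Hence, by periodicity,
\[
\coker f_2\simeq H^2(C_5,X)\simeq\hat H^0(C_5,X).
\]
This is the identification the paper takes from Assim--Movahhedi, Theorem 1.5(ii). So the hypothesis gives \(\hat H^0=0\), not \(\Z/5\).

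The Herbrand quotient cannot rescue your version. Every \(\Z_5\)-lattice spanning the regular representation \(\Q_5[C_5]\) has Herbrand quotient \(1\), so \(|\hat H^0|=|\hat H^{-1}|\) for both the free and the non-free lattice. In particular \(\hat H^{-1}=0\) would force \(\hat H^0=0\). As written, your chain \(\coker f_2=0\Rightarrow\hat H^0\simeq\Z/5\Rightarrow X\) free has two false arrows; only its endpoints are right. Your ``main obstacle'' is exactly where the argument breaks.

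The repair is the direct route: \(\hat H^0(C_5,X)=0\) excludes \(\Z_5\oplus I_{C_5}\). Alternatively, as in the paper, write \(X^{C_5}=\Z_5y\), note that \(X/\Z_5y\) is free of rank one over \(\Z_5[\zeta_5]\), and lift a generator to \(x\) with \(N_Gx=by\). Then \(N_GX=(5,b)\Z_5y\), so \(b\) is a unit and \(x\) generates \(X\).

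You also still owe the index computation you call standard. The paper normalizes the reflection action by the unit \((1-w)/2\) so that \(c(rx)=-\bar r x\). It then checks that two reflection-invariant lattices generate \(I_{\mathcal R}x\), where \(\mathcal R=\Z_5[C_5]\), and obtains \(\mathcal R/(I_{\mathcal R}+\Z_5N_G)\simeq\Z/5\).

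Finally, your justification of integral descent via \(C_2\) covers only the reflection fields. For \(k\) the group is \(C_5\), and you need the integral descent theorem for \(K_3^{\mathrm{ind}}\) (Levine, Theorem 4.13) to know that the image of \(\mathcal B_5(k)\) is all of \(X^{C_5}\). This matters: with \(5X^{C_5}\) in its place, the same computation would give index \(25\).
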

\begin{proof}
Let \(\Lambda=K_3^{\mathrm{ind}}(M)\otimes\Z_5\).
It is a free lattice of rank five. The torsion formula for indecomposable
\(K_3\) and the Chern comparison \cite[Theorem 4.5 and Corollary 4.6]{Levine}
show that five-torsion would require \(\Q(\sqrt5)\subset M\).
But the unique quadratic subfield of a non-real dihedral field is imaginary.
Thus there is no five-torsion.
Levine's integral Galois descent \cite[Theorem 4.13]{Levine} identifies the
actual subfield images with \(\Lambda^{C_5}\) and the two reflection invariant
lattices. Its finite-quotient order condition is prime to the field
characteristic, which is zero; it does not exclude degree five.
The natural Suslin exact sequence identifies the free \(K_3\) quotient with
the free Bloch quotient, compatibly with these images.

Weight-two integral \(H^1\) does not increase on enlarging \(S\), because
the possible new finite-field \(H^0(\kappa(v),\Z_5(1))\) is zero.
Thus Levine's comparison identifies
\(H^1(G_{M,S},\Z_5(2))\) naturally with \(\Lambda\), compatibly with
the Galois action. Theorem 1.5(ii) of \cite{AM12} identifies the restriction
cokernel with the second group cohomology of this module. Cyclic
periodicity then gives
\[
 \coker f_2\simeq H^2(C_5,\Lambda)
 \simeq\Lambda^{C_5}/N_G\Lambda=0.
\]
Write \(\Lambda^{C_5}=\Z_5y\). This invariant sublattice is saturated,
so \(P=\Lambda/\Z_5y\) is free of rank four over \(\Z_5\).
The norm \(N_G=\Phi_5(\rho)\) annihilates \(P\); hence \(P\) is a
torsion-free rank-one module over the discrete valuation ring
\(\Z_5[\zeta_5]\), and is free. Lift a generator to \(x\), with
\(N_Gx=by\). Since \(y,x,\rho x,\rho^2x,\rho^3x\) generate \(\Lambda\),
we have \(N_G\Lambda=(5,b)\Z_5y\). The zero norm quotient forces
\(b\in\Z_5^\times\), whence \(y=b^{-1}N_Gx\). Thus \(x\) generates
\(\Lambda\) over \(\Z_5[C_5]\). The resulting surjection is an
isomorphism because both sides have \(\Z_5\)-rank five.

Let \(c\) be a reflection. On the invariant line it acts as \(-1\), since
that line descends to the imaginary quadratic field, whose \(K_3\) free
quotient has no rational invariant part. In the regular group ring
\(\mathcal R=\Z_5[C_5]\), write \(cx=wx\), so \(w\bar w=1\) and
\(\operatorname{aug}(w)=-1\), with \(\bar\rho=\rho^{-1}\).
The element \(u=(1-w)/2\) is a unit: it has augmentation one in the
local ring \(\Z_5[C_5]\). It satisfies \(w\bar u=-u\). Replacing \(x\) by \(ux\)
normalizes the action to \(c(rx)=-\bar r x\).

For a basis \(e_j=\rho^jx\), a reflection \(\rho^bc\) sends
\(e_j\) to \(-e_{b-j}\). Its invariant lattice is generated by the
differences \(e_j-e_{b-j}\). Two different reflections give a connected
graph on the five indices: their composition is a nonzero translation.
Their differences therefore generate the augmentation ideal \(I_{\mathcal R}\).
Adding \(\Lambda^{C_5}=\Z_5N_Gx\), the quotient is
\(\mathcal R/(I_{\mathcal R}+\Z_5N_G)\simeq\Z/5\).
This computes the five-adic index of the actual subfield Bloch lattices.
Guo--Qin's bound \(Q_2\mid5\) makes it the full index.
\end{proof}

Apply Proposition \ref{prop:lattice} to the good lines to finish
Theorem \ref{thm:main}. Varying \(\ell\) changes the discriminant, so the
fields obtained from distinct parameter primes are non-isomorphic.

\begin{corollary}\label{cor:seed}
The conductor-25 seed \(M_0\) satisfies
\(K_2(O_{M_0})(5)=0\) and \(Q_2(M_0)=5\).
\end{corollary}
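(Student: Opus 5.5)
We would run the argument of Sections 3--5 again with the seed \(M_0\) itself, taking \(S=S_0\) and no tame primes at all. Write \(\mathcal G_0=G_{k,S_0}\), \(\mathcal N_0=\ker\eta_0=G_{M_0,S_0}\) and \(G=\mathcal G_0/\mathcal N_0\simeq C_5\). We need \(\mathcal N_0=G_{M_0,S_0}\). This holds because \(M_0/k\) has conductor \((25)\), so it is unramified outside \(S_0\). The normal-closure remark from Section 4 then applies unchanged.

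\textbf{Step 1: mod-five co-descent.} By Lemma~\ref{lem:base}, \(H^2(\mathcal G_0,T)=0\). The cup-product cokernel in the formula of \cite[Theorem 2.2.4]{LLSWW} is therefore automatically zero, and we get \(IA=I^2A\) for \(A=H^2(\mathcal N_0,T)\). Since \(I^5=0\), this gives \(IA=0\). Top-degree corestriction \cite[Equation (2.4)]{LLSWW} gives \(A=A_G\simeq H^2(\mathcal G_0,T)=0\). We do not even need the pencil argument here.

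\textbf{Step 2: integral vanishing.} Put \(B_0=H^2(G_{M_0,S_0},\Z_5(2))\). This group is finite. As in Section 4, continuous \(H^3\) vanishes, so \(B_0/5B_0\simeq A=0\), and Nakayama gives \(B_0=0\). By the integral comparison recalled at the start of Section 2, \(K_2(O_{M_0}[S_0^{-1}])(5)\simeq B_0\). Inverting the five-primes does not change the five-primary part of the tame kernel, because the residue groups have order \(5^f-1\), which is prime to five. Hence \(K_2(O_{M_0})(5)=0\).

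\textbf{Step 3: the Bloch lattice index.} By Lemma~\ref{lem:base}, \(H^2(G_{k,S_0},\Z_5(2))\simeq K_2(\Z[i])(5)=0\). The target \(B_0^{C_5}\) is also zero. So the restriction \(f_2\) is a map \(0\to0\), and its cokernel is zero. The field \(M_0/\Q\) is a non-real \(D_{10}\)-extension with quadratic subfield \(k=\Q(i)\). The set \(S_0\) contains the infinite places, the five-primes, and every prime ramified in \(M_0/k\). Proposition~\ref{prop:lattice} therefore applies and gives \(Q_2(M_0)=5\).

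\textbf{Expected obstacle.} The only delicate point is the application of \cite{LLSWW} and of Proposition~\ref{prop:lattice} in this unramified-outside-five setting. One must check that the identification \(\mathcal N_0=G_{M_0,S_0}\) holds and that Proposition~\ref{prop:lattice} allows \(S\) with no tame primes. Both hold: the hypotheses of the proposition only ask that \(S\) contain the ramified primes, and here these are the five-primes. As a sanity check, the genus formula also gives \(|W_G|=5^{0}\cdot|K_2(O_k)(5)|=1\) with no tame primes, and this is consistent with Step 2.
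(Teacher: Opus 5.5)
Your proof is correct and follows the paper's route. Both arguments use top-degree co-descent down to the vanishing base group of Lemma~\ref{lem:base}, nilpotence of the augmentation ideal of \(C_5\) to force \(B_0=0\), localization for the tame kernel, and Proposition~\ref{prop:lattice} applied to the restriction \(0\to0\). The only difference is the order of steps: you co-descend modulo five via \cite[Equation (2.4)]{LLSWW} and then lift with Bockstein and Nakayama, whereas the paper co-descends integrally via \cite[Theorem 1.5]{AM12} and reduces modulo five afterwards. Your appeal to \cite[Theorem 2.2.4]{LLSWW} is superfluous, since \(A_G=0\) and \(I^5=0\) already give \(A=0\).
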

\begin{proof}
Put \(B_0=H^2(G_{M_0,S_0},\Z_5(2))\). Top-degree integral co-descent
gives \((B_0)_{C_5}=0\), by Lemma~\ref{lem:base}. The group \(B_0\)
is a finite five-group; the co-descent isomorphism is the one following
\cite[Theorem 1.5]{AM12}. Reducing modulo five, the augmentation ideal of
\(\F_5[C_5]\) is nilpotent, so vanishing of coinvariants forces
\(B_0/5B_0=0\), and hence \(B_0=0\). Localization gives the tame-kernel
assertion. Restriction is consequently \(0\to0\), and
Proposition~\ref{prop:lattice} gives the index assertion.
\end{proof}
The seed already realizes the nontrivial index. The main theorem adds
infinitely many extensions over the same quadratic field, mixed tame and
wild ramification, and the specified nonzero tame kernel.

\section{Fixed data and reproducibility}
The theorem uses a fixed finite extension to define its prime set.
It also has a small-field exact description. In \(E=\Q(y)\), \(y=\zeta_{20}\),
one may take the Tate-line generator
\[
 g_2=-5y^7+7y^6-3y^5+y^4-5y^3+6y^2-3.
\]
The supplied PARI/GP 2.15.5 certificate proves \(h(E)=1\), certifies the
fundamental unit group, and records \(g_2\)'s coordinates \((1,3,1,0)\)
in the basis
\[
 (u_1,u_2,u_3)=(y^4+1,y-1,y^3-1),\qquad \mu(E)=\langle y\rangle.
\]
Thus it is not a fifth power. If \(\sigma_{13}(y)=y^{13}\) and
\(c(y)=y^{-1}\), the unit coordinates of
\(\sigma_{13}(g_2)/g_2^2\) and \(c(g_2)/g_2\) are respectively
\((0,-5,-5,15)\) and \((0,0,0,10)\), where the final coordinate refers to
the order-twenty roots of unity. More explicitly, for
\(h=y^7-y^5-y^2-y-1\), one has the polynomial identities
\[
 \sigma_{13}(g_2)=g_2^2h^5,\qquad c(g_2)=g_2(y^2)^5
 \quad\text{in }\Q[y]/(\Phi_{20}(y)).
\]
The ancillary file \texttt{anc/check\_tate\_identities.gp} verifies these
identities and prints the full unit basis. This gives the
required \(\Delta\)-character \(\omega^{-1}\). Since it is an \(S_0\) Kummer
class and the base cohomology is one-dimensional, it generates the canonical
line \(D\).

The fixed pure-wild field \(M_0\) is the splitting field of
\[
 f_0(X)=X^5+10X^3-10X^2-15X-18.
\]
A separate ray-class certificate over \(\Q(i)\) proves its group,
conductor, and discriminant. For \(\ell\equiv1\pmod{20}\), membership in
\(\mathcal P\) can be checked by
\[
 g_2^{(\ell-1)/5}\equiv1\pmod Q\quad(Q\mid\ell\text{ in }E),
 \qquad f_0\bmod\ell\text{ irreducible}.
\]
The second test detects the nontrivial cyclic Frobenius, since \(\ell\) splits
in the quadratic subfield. The ancillary file \texttt{anc/check\_tate\_data.gp} checks all eight residue
symbols in \(E\), using PARI/GP 2.15.5. It finds that \(701,941,1321,1381,1801,2081\) meet both
conditions, whereas \(41\) fails the Tate condition. These calculations check
fixed data and examples; the infinite-family proof uses Chebotarev and the
cup-product argument. The file \texttt{anc/check\_wild\_seed.gp}
constructs the seed and checks its group, conductor, and discriminant.
Both files use \texttt{bnfcertify}; their complete outputs are supplied.
The success markers are \texttt{TATE\_DATA\_DONE} and
\texttt{WILD\_SEED\_DONE}. These checks certify the fixed arithmetic
inputs, rather than the index of a selected mixed extension.

\section{Relation to earlier work and scope}
Browkin--Gangl \cite[Section 12.4]{BG} already gave numerical evidence for
\(Q_2=5\), including a dihedral Hilbert-class-field example with quadratic
subfield \(\Q(\sqrt{-47})\). Their numerical lattice-generation qualifications
must be kept distinct from an integral proof. Guo--Qin \cite{GQ} established
the dihedral divisibility theorem, which we use, and an infinite family with
\(Q_2=1\) and quadratic subfield \(\Q(\sqrt{-3})\).
Caputo \cite{Caputo12} gives algebraic Brauer--Kuroda formulae for
dihedral extensions in terms of integral higher-unit indices, without
assuming a Lichtenbaum special-value formula. Regulator constants and
cohomological index formulae are further developed in
\cite{Caputo26,BS}. These provide the general integral framework; the
arithmetic input here is the fixed-base mixed ray construction and its
uniform cup-product selection. The general integral
co-descent and genus statements used here belong to Assim--Movahhedi
\cite{AM12,AM19}; the generalized Bockstein theorem belongs to
Lam--Liu--Sharifi--Wake--Wang \cite{LLSWW}.

The arithmetic construction here keeps the quadratic subfield fixed,
introduces mixed tame and wild ramification, and selects among four actual
extensions with a uniform two-root bound. Corollary~\ref{cor:seed} separates
the simpler regularity-based seed example from this mixed infinite family.

The construction leaves open the distribution of the four mixed cup
products. The determinant argument guarantees at least two surjective
maps for every parameter in \(\mathcal P\), but does not determine how
often three or all four are surjective.

\paragraph{Use of generative AI.}
Generative AI tools, including OpenAI Codex and Kimi, were used extensively
in the research and preparation of this manuscript. Their uses included
exploring proof strategies, developing and revising arguments, assisting
with literature searches and checks of cited results, producing and
checking PARI/GP scripts, and drafting and revising the text. AI-assisted
critiques were also used during revision; these do not constitute external
peer review. The author remains responsible for all mathematical claims,
computations, references, and the final manuscript.

\end{document}